\newtheorem{lemma}{Lemma}
\theoremstyle{definition}
\newtheorem{definition}[lemma]{Definition}
\theoremstyle{remark}
\newtheorem{example}[lemma]{Example}
\newcommand{\Nb}{\mathbb{N}}
\title{Cores, shell indices and the degeneracy of a graph limit}
\author{Johannes Rauh}
\address{MPI for Mathematics in the Sciences\\Inselstraße 23\\04103 Leipzig}
\email{jrauh@mis.mpg.de}
\begin{document}

\begin{abstract}
  The $k$-core of a graph is the maximal subgraph in which every node has degree at least~$k$, the shell index of a
  node is the largest $k$ such that the $k$-core contains the node, and the degeneracy of a graph is the largest shell
  index of any node.  After a suitable normalization, these three concepts generalize to limits of dense graphs (also
  called graphons).  In particular, the degeneracy is continuous with respect to the cut metric.
\end{abstract}

\maketitle{}

\section{Introduction}

The goal of this paper is to study the graph theoretic notions of cores, shell indices and degeneracy from the view
point of graphons (i.e. limits of dense graphs).  The first step is to normalize these quantities and generalize the
definitions.  The second step is to investigate measurability and continuity of the defined quantities.

Continuity (with respect to the cut metric) allows to transfer properties that have been proved for finite graphs to the
domain of graphons.  However, this goes against the idea behind graphons that some properties of graphs are easier to
understand from an asymptotic viewpoint that is amenable to analytic approaches.  Thus, the goal of this presentation is
to provide direct proofs that do not rely on graph theoretic results.  This works for all results, except for
Lemma~\ref{lem:upper-bound-e}.  Admittedly, the proofs for graphons often imitate the corresponding proofs for graphs.

A further result is the minimal and maximal edge density of a graphon for a given degeneracy.
This generalizes results of \citet{KWPR16:Geometry_of_Degeneracy_Model}.

\section{Preliminaries}
\label{sec:prelim}

\subsection{The case of finite graphs}
\label{sec:finite}

Let $G=(V,E)$ be a (simple, undirected) graph, and let $k\ge 0$ be an integer.
The \emph{$k$-core} of a graph is the maximal subgraph in which every node has degree at least~$k$.  It can be
constructed algorithmically as follows:
\begin{align*}
  K_{k}^{0} &:= V, \\
  K_{k}^{i+1} &:= \big\{ x\in K_{k}^{i} : d_{G|_{K_{k}^{i}}} (x) \ge k \big\}.
\end{align*}
Here, $d_{G|_{K_{k}^{i}}}(x)$ denotes the degree of~$x$ with the induced subgraph~$G|_{K_{k}^{i}}$.  Thus, $K_{k}^{i+1}$
arises from $K_{k}^{i}$ by dropping all nodes that are connected to less than~$k$ other nodes among~$K_{k}^{i}$.  The
sequence $\left(K_{k}^{i}\right)_{i}$ stabilizes after finitely many steps, and the limit $K_{k}:=\bigcap_{i}K_{k}^{i}$
equals the set of nodes of the $k$-core of~$G$.

The \emph{shell index} of a node $x$ of~$G$ is the largest $k$ such that $x\in K_{k}$.  The \emph{degeneracy} of a graph
is the largest shell index of any node.  Alternatively, the degeneracy $\delta(G)$ is the largest $k$ such that $K_{k}$
is non-empty.

As shown by \citet[Proposition~3.1]{KWPR16:Geometry_of_Degeneracy_Model}, the degeneracy and the number of edges satisfy
\begin{equation}
  \label{eq:KWPR-inequalities}
  \binom{\delta(G)+1}{2} \le |E| \le \binom{\delta(G)+1}{2} + (n - \delta(G) - 1)\delta(G).
\end{equation}

\subsection{Graphons}
\label{sec:graphons}

For an introduction to graphons (also called graph limits), see \cite{LovaszSzegedy06:Limits_of_dense_graphs} and
\cite{DiaconisJanson08:graph_limits_and_exchangeability}.  In this text, a \emph{graphon} is a (Lebesgue)-measurable
function $w:[0,1]^{2}\to[0,1]$ from the unit square to the unit interval that is symmetric in the sense that
$w(x,y) = w(y,x)$ for all~$x,y\in[0,1]$.

A graphon is a limit object of dense graphs.  In some sense it is analogous to an adjacency matrix.  However, unlike an
adjacency matrix, it can take values other than 0 or 1, and thus its entries do not mark edges, but edge densities.
Similarly, the interval $[0,1]$ that constitutes the domain of each of the two arguments of a graphon $w$ should not be
interpreted as a set of nodes, but rather as a set of node classes.

To relate graphons with finite graphs, consider the following construction: for a finite graph $G$ with $N$ nodes,
consider the random graph on $n\le N$ nodes that arises by randomly sampling $n$ nodes and looking at the induced
subgraph.  Similarly, a subgraph of arbitrary size can be sampled from a graphon~$w$ as follows: Take $n$ i.i.d. samples
$x_{1},\dots,x_{n}$ from the uniform distribution on~$[0,1]$.  Then let $V_{n}=\{1,\dots,n\}$, and for each
$1\le i<j\le n$, add an edge $\{i,j\}$ independently with probability~$w(x_{i},x_{j})$.

Different graphons are considered to be equivalent if the induced probabilities underlying the above sampling process
are identical.
One way to obtain equivalent graphons is by means of measure-preserving maps:
A map $\sigma:[0,1]\to[0,1]$ is \emph{measure-preserving} if it is measurable and satisfies $|A| = |\sigma^{-1}(A)|$ for
any (Lebesgue)-measurable set $A\subseteq[0,1]$.  Given a measure-preserving map $\sigma$, the transformed graphon
$w^{\sigma}$ is defined by
\begin{equation*}
  w^{\sigma}(x,y) = w(\sigma(x), \sigma(y)).
\end{equation*}

Measure preserving maps generalize permutations of the nodes of a finite graph.  However, measure preserving maps need
not be injective: For example, the map
\begin{equation*}
  \sigma_{2} : x\mapsto
  \begin{cases}
    2x, & \text{ if }x \le \frac12, \\
    2x - 1, & \text{ if }x > \frac12,
  \end{cases}
\end{equation*}
can be shown to be measure preserving.

Equivalence of graphons is related to the cut metric.  For any pair of graphons $w,w'$, define
\begin{equation*}
  d_{\square}(w, w') = \sup_{S,T\subseteq[0,1]} \left|\int_{S}\mathrm{d}x\int_{T}\emph{d}y \big(w(x,y) - w'(x,y)\big) \right|.
\end{equation*}
The \emph{cut metric} is then defined by
\begin{equation*}
  \delta_{\square}(w, w') = \inf_{\sigma} d_{\square}(w^{\sigma},w'),
\end{equation*}
where the infimum is over all measure-preserving maps~$\sigma$.
The cut metric is a semi-metric on the set of all graphons, and it is a metric on the set of all equivalence classes of
graphons.

\section{The core and the degeneracy of a graphon}

Let $w:[0,1]^{2}\to[0,1]$ be a graphon.  For any $x\in[0,1]$, let $d_{w}(x) = \int_{0}^{1}w(x,y)\mathrm{d}y$ be the
\emph{degree} of $x$ in~$w$.  For any measurable $K\subseteq[0,1]$, let $d^{K}_{w}(x) = \int_{K}w(x,y)\mathrm{d}y$ be
the \emph{degree} of $x$ in~$w$ restricted to~$K$.  The following is a straight-forward generalization of the
corresponding graph theoretic definitions:
\begin{definition}
  The \emph{$\kappa$-core} of~$w$ is the largest subset $K\subseteq[0,1]$
    with $d_{w}^{K}(x)\ge\kappa$ for all~$x\in K$.
  The \emph{shell index} of $x\in[0,1]$ in $w$ is
  \begin{equation*}
    \delta_{x}(w) := \sup\big\{\kappa : x\in K_{\kappa}(w) \big\}.
  \end{equation*}
  The \emph{degeneracy} is
  \begin{equation*}
    \delta(w) := \sup\big\{\kappa : |K_{\kappa}(w)|> 0 \big\},
  \end{equation*}
  where $|K_{\kappa}(w)|$ denotes the (Lebesgue)-volume of~$K_{\kappa}(w)$.
\end{definition}
In fact, as Lemma~\ref{lem:definition-d} below shows,
\begin{equation*}
  \delta(w) = \max\big\{\kappa : |K_{\kappa}(w)|\neq\emptyset\big\} = \max\big\{\delta_{x}(w) : x\in[0,1]\big\}.
\end{equation*}

The algorithmic definition of the $k$-core also generalizes to the graphon case:
for $\kappa\in[0,1]$, let
\begin{align*}
  K_{\kappa}^{1}(w) &:= \big\{ x\in[0,1] : d_{w}(x) \ge\kappa \big\}, \\
  K_{\kappa}^{n+1}(w) &:= \Big\{ x\in K_{\kappa}^{n}(w) : d_{w}^{K_{\kappa}^{n}(w)}(x) \ge\kappa \Big\}.
\end{align*}
Then the $\kappa$-core is $K_{\kappa}(w) := \bigcap_{n=1}^{\infty} K_{\kappa}^{n}(w)$.
\begin{example}
  \label{ex:min-graphon}
  Consider the graphon $w(x,y) = \min\{x,y\}$.  Since $w(x,y)$ is a monotone function in~$x$ for any fixed~$y$, it
  follows that $d_{w}(x)$ is monotone in~$x$, and also $d^{K}_{w}(x)$ increases monotonically with~$x$ for any
  fixed~$K$.  Therefore, each set~$K_{\kappa}^{n}(w)$ is a closed interval of the form $[k_{\kappa}^{n}(w),1]$ (or
  empty).  Thus, each $\kappa$-core is also of the form $K_{\kappa}(w) = [k_{\kappa},1]$ (or empty).  The numbers
  $k_{\kappa}^{n}$ satisfy the recursion
  \begin{equation*}
    k_{\kappa}^{n+1} = 1 - \sqrt{1 - (k_{\kappa}^{n})^{2} - 2\kappa},
    \qquad
    k_{\kappa}^{0} = 0,
  \end{equation*}
  where the expression inside the squareroot is non-negative if and only if $K_{\kappa}^{n+1}(w)$ is non-empty.
  The sequence $(k_{\kappa}^{n})_{n}$ increases (as long as it is defined).  If the sequence does not abort, then it
  approaches a fixed point of the recursion.  The recursion has a fixed point if and only $\kappa\le\frac14$, in which
  case the only stable fixed point lies at
  \begin{equation*}
    k_{\kappa} = \lim_{n\to\infty} k_{\kappa}^{n} = \frac12(1-\sqrt{1 - 4\kappa}).
  \end{equation*}
  Thus, for $\kappa\le\frac14$, the $\kappa$-core is a non-empty interval $K_{\kappa} = [k_{\kappa},1]$.  On the other
  hand, for $\kappa>\frac14$, the sequence $k_{\kappa}^{n}$ has no fixed point, and $K_{\kappa}=\emptyset$.

  Thus, in this example, the cores are a family of intervals $[k_{\kappa},1]$ that are contained in each other and with
  boundaries that depend smoothly on~$\kappa$ for $\kappa\le\frac14$.  The smallest non-empty core is the
  $\frac14$-core $[\frac12,0]$.  Thus, the degeneracy of~$w$ is $\delta(w)=\frac14$.

  The analysis can easily be generalized to other graphons $w(x,y)$ that are monotone in~$x$ for any fixed~$y$.  Such
  graphons arise, for example, in the study of large random graph models associated with a fixed degree
  sequences~\citep{ChatterjeeDiaconisSly11:Random_graphs_with_given_degree_sequence}.  In general, the sets
  $K_{\kappa}^{n}(w)$ and~$K_{\kappa}(w)$ will always be intervals, but the boundaries need not depend continuously
  on~$\kappa$.  However, the next lemma shows that $\bigcap_{\kappa'<\kappa}K_{\kappa'}^{n}(w) = K_{\kappa}^{n}(w)$ and
  $\bigcap_{\kappa'<\kappa}K_{\kappa'}(w) = K_{\kappa}(w)$, which implies that the interval boundaries are upper
  semi-continuous in this case.
\end{example}

\begin{lemma}
  \label{lem:properties-Kn}
  $ $
  \begin{enumerate}
  \item The sets $K^{n}_{\kappa}(w)$ and $K_{\kappa}(w)$ are measurable.
  \item If $\kappa\ge\kappa'$, then $K^{n}_{\kappa}(w) \subseteq K^{n}_{\kappa'}(w)$ and $d_{w}^{K_{\kappa}^{n}}(x)\le
    d_{w}^{K_{\kappa'}^{n}}(x)$ for all~$x$.
  \item Thus, if $\kappa\ge\kappa'$, then $K_{\kappa}(w) \subseteq K_{\kappa'}(w)$.
  \item For each~$n$ and $\kappa>0$, $K_{\kappa}^{n}(w) = \bigcap_{\kappa'<\kappa} K_{\kappa'}^{n}(w)$, and thus
    $K_{\kappa}(w) = \bigcap_{\kappa'<\kappa} K_{\kappa'}^{n}(w)$.
  \item For each~$n$ and $\kappa>0$, $d_{w}^{K_{\kappa}^{n}(w)}(x) = \inf_{\kappa'<\kappa}d_{w}^{K_{\kappa'}^{n}(w)}(x)$
    for all~$x$
  \end{enumerate}
\end{lemma}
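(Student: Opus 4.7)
The plan is to prove parts (1), (2), (4), and (5) simultaneously by induction on~$n$; then (3) follows from (2), and the corresponding assertions for the $\kappa$-core $K_\kappa(w)=\bigcap_n K^n_\kappa(w)$ drop out by taking a further intersection over~$n$. The only analytic tools needed are Fubini's theorem (to conclude that $x\mapsto d_w^{K}(x)$ is measurable whenever $K$ is) and dominated convergence (to push the integral through a decreasing intersection of sets, which is allowed because $w\le 1$).

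For the base case $n=1$, $d_w$ is measurable by Fubini, so $K^1_\kappa=\{d_w\ge\kappa\}$ is a measurable superlevel set; monotonicity of $K^1_\kappa$ in~$\kappa$ is immediate, and the inequality $d_w^{K^1_\kappa}(x)\le d_w^{K^1_{\kappa'}}(x)$ follows from $w\ge 0$. Left-continuity $K^1_\kappa=\bigcap_{\kappa'<\kappa}K^1_{\kappa'}$ holds because $d_w(x)\ge\kappa$ iff $d_w(x)\ge\kappa'$ for every $\kappa'<\kappa$. Finally, taking any sequence $\kappa'_m\uparrow\kappa$, the indicators $\mathbf{1}_{K^1_{\kappa'_m}}$ decrease to $\mathbf{1}_{K^1_\kappa}$, so dominated convergence gives $d_w^{K^1_{\kappa'_m}}(x)\to d_w^{K^1_\kappa}(x)$, and this limit equals the infimum over all $\kappa'<\kappa$ by monotonicity in~$\kappa'$.

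For the inductive step, measurability of $K^{n+1}_\kappa$ is immediate from Fubini applied to $w(x,y)\mathbf{1}_{K^n_\kappa}(y)$ once $K^n_\kappa$ is known to be measurable. If $\kappa\ge\kappa'$ and $x\in K^{n+1}_\kappa$, then $x\in K^n_\kappa\subseteq K^n_{\kappa'}$ and $d_w^{K^n_{\kappa'}}(x)\ge d_w^{K^n_\kappa}(x)\ge\kappa\ge\kappa'$ by the inductive hypothesis~(2), so $x\in K^{n+1}_{\kappa'}$; the induced degree inequality is then automatic. For left-continuity, one inclusion is part of monotonicity; for the other, suppose $x\in K^{n+1}_{\kappa'}$ for every $\kappa'<\kappa$. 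Then $x\in\bigcap_{\kappa'<\kappa}K^n_{\kappa'}=K^n_\kappa$ by the inductive hypothesis~(4), and $d_w^{K^n_{\kappa'}}(x)\ge\kappa'$ for every $\kappa'<\kappa$. The inductive hypothesis~(5) identifies $d_w^{K^n_\kappa}(x)=\inf_{\kappa'<\kappa}d_w^{K^n_{\kappa'}}(x)$, and combining this with monotonicity of $\kappa'\mapsto d_w^{K^n_{\kappa'}}(x)$ and the bounds $d_w^{K^n_{\kappa'}}(x)\ge\kappa'$ forces $d_w^{K^n_\kappa}(x)\ge\kappa$, so $x\in K^{n+1}_\kappa$. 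Part~(5) at level $n+1$ then follows by another dominated convergence argument, using the just-established left-continuity of $K^{n+1}$.

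The passage to the full core is formal: $K_\kappa(w)$ is measurable as a countable intersection of measurable sets, (3) is a countable intersection of~(2), and $\bigcap_{\kappa'<\kappa}K_{\kappa'}(w)=\bigcap_n\bigcap_{\kappa'<\kappa}K^n_{\kappa'}(w)=\bigcap_n K^n_\kappa(w)=K_\kappa(w)$. The only subtle point I anticipate is in the left-continuity step, where the pointwise bounds $d_w^{K^n_{\kappa'}}(x)\ge\kappa'$ must be combined with monotonicity of the degree in~$\kappa'$ to produce, for each fixed $\kappa^*<\kappa$, a bound $d_w^{K^n_{\kappa'}}(x)\ge\kappa^*$ valid for \emph{every} $\kappa'<\kappa$ (and not just for $\kappa'\ge\kappa^*$), before letting $\kappa^*\uparrow\kappa$. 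This is precisely where the four inductive statements interlock and where the inductive hypothesis on~(5) is essential.
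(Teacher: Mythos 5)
Your proposal is correct and follows essentially the same route as the paper: a simultaneous induction on $n$ in which (4) and (5) interlock, with (5) obtained from (4) by a convergence theorem (the paper uses monotone convergence where you use dominated convergence) and the nontrivial inclusion in (4) handled via the inductive hypothesis on (5). The only cosmetic difference is that you prove that inclusion directly while the paper argues by contraposition.
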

\begin{proof}
  (1) follows from induction, since $d_{w}^{K}(x)$ is measurable if $K$ is measurable.  (2) and (3) follow by
  definition.  The last two statements (4) and (5) can be proved simultaneously by induction: for each~$n$, the statement about
  $d_{w}^{K_{\kappa}^{n}(w)}(x)$ 
  follows from the statement about $K_{\kappa}^{n}$ and the monotone convergence theorem for integrals.

  In the case~$n=1$, $K_{\kappa}^{1}(w) = \bigcap_{\kappa'<\kappa} K_{\kappa'}^{1}(w)$ follows from the fact that a
  number $d$ satisfies $d\ge\kappa$ if and only if~$d\ge\kappa'$ for all~$\kappa'<\kappa$.
%
  For $n>1$, clearly,
  \begin{multline*}
    K_{\kappa}^{n+1}(w) = \Big\{ x\in K_{\kappa}^{n}(w) : d_{w}^{K_{\kappa}^{n}(w)}(x) \ge\kappa \Big\}
    \\
    = \bigcap_{\kappa'<\kappa}\Big\{ x\in K_{\kappa}^{n}(w) : d_{w}^{K_{\kappa}^{n}(w)}(x) \ge\kappa' \Big\}
    \\
    \subseteq \bigcap_{\kappa'<\kappa}\Big\{ x\in K_{\kappa'}^{n}(w) : d_{w}^{K_{\kappa'}^{n}(w)}(x) \ge\kappa' \Big\}
    = \bigcap_{\kappa'<\kappa} K_{\kappa'}^{n+1}(w).
  \end{multline*}
  For the other containment, suppose that $x\notin K_{\kappa}^{n+1}(w)$.
  Then either $x\notin K_{\kappa}^{n}(w)$, or $d_{w}^{K^{n}_{\kappa}(w)}(x) < \kappa$.  In the first case, by induction,
  there exists $\kappa'_{0}$ with $x\notin K_{\kappa'_{0}}^{n}(w)$, whence
  $x\notin\bigcap_{\kappa'<\kappa} K_{\kappa'}^{n+1}(w)$.  In the second case, there exists $\kappa'_{0}$ with
  \begin{equation*}
    \kappa > \kappa'_{0} > d_{w}^{K_{\kappa}^{n}(w)}(x) = \inf_{\kappa'<\kappa} d_{w}^{K_{\kappa'}^{n}(w)}(x),
  \end{equation*}
  where the induction hypothesis was used.
  Thus, there exists $\kappa'$ with $\kappa'_{0}<\kappa'<\kappa$ with $d_{w}^{K_{\kappa'}^{n}(w)}(x)\le\kappa'_{0}<\kappa'$.
  Thus, in this case $x\notin K_{\kappa'}^{n+1}(w)$ either.  This finishes
  the induction step.
\end{proof}
\begin{lemma}
  \label{lem:properties-K}$ $
  \begin{enumerate}
  \item $|K_{\kappa}(w)| = \lim_{n\to\infty}|K_{\kappa}^{n}(w)|$.
  \item If $|K_{\kappa}(w)|>0$, then $|K_{\kappa}(w)|\ge\kappa$.
  \item If $|K_{\kappa}(w)|=0$, then $K_{\kappa}(w)=\emptyset$.  In this case, the sequence $K_{\kappa}^{n}(w)$
    stabilizes after a finite number of steps, i.e.~$K_{\kappa}^{n}(w)=\emptyset$ for some finite~$n$.
  \item $\kappa\mapsto|K_{\kappa}(w)|$ is upper semi-continuous: $|K_{\kappa}(w)| =
    \lim_{\kappa'<\kappa}|K_{\kappa'}(w)| = \limsup_{\kappa'\to\kappa}|K_{\kappa'}(w)|$.
  \end{enumerate}
\end{lemma}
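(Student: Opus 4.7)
The plan is to treat the four parts in order, using only the monotonicity and intersection properties from Lemma~\ref{lem:properties-Kn} together with continuity of Lebesgue measure on decreasing sequences (measures bounded by~$1$).

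For part~(1), I would observe that the $K_{\kappa}^{n}(w)$ form a decreasing sequence of measurable sets with $K_{\kappa}(w) = \bigcap_{n} K_{\kappa}^{n}(w)$ and $|K_{\kappa}^{1}(w)| \le 1 < \infty$, so continuity of Lebesgue measure from above gives $|K_{\kappa}(w)| = \lim_{n} |K_{\kappa}^{n}(w)|$.

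For part~(2), the key step is to promote the defining inequality from the approximants $K_{\kappa}^{n}$ to the limit~$K_{\kappa}$. For any $x \in K_{\kappa}(w)$, we have $x \in K_{\kappa}^{n+1}(w)$ for every~$n$, so $d_{w}^{K_{\kappa}^{n}(w)}(x) \ge \kappa$. Since $K_{\kappa}^{n}(w) \searrow K_{\kappa}(w)$ and $w(x,\cdot) \le 1$ is integrable, dominated convergence yields $d_{w}^{K_{\kappa}(w)}(x) \ge \kappa$. Because $w \le 1$, this degree is bounded above by $|K_{\kappa}(w)|$, giving $|K_{\kappa}(w)| \ge \kappa$.

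For part~(3), I would first note that $K_{0}(w)=[0,1]$, so the hypothesis $|K_{\kappa}(w)|=0$ forces $\kappa>0$. The argument of~(2) still shows that any $x \in K_{\kappa}(w)$ satisfies $\kappa \le d_{w}^{K_{\kappa}(w)}(x) \le |K_{\kappa}(w)| = 0$, a contradiction; hence $K_{\kappa}(w) = \emptyset$. For the stabilization, use part~(1): since $|K_{\kappa}^{n}(w)| \to 0$ and $\kappa > 0$, some $n_{0}$ has $|K_{\kappa}^{n_{0}}(w)| < \kappa$. The trivial bound $d_{w}^{K_{\kappa}^{n_{0}}(w)}(x) \le |K_{\kappa}^{n_{0}}(w)| < \kappa$ then forces $K_{\kappa}^{n_{0}+1}(w) = \emptyset$.

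For part~(4), I would combine Lemma~\ref{lem:properties-Kn}(4) across~$n$: exchanging intersections,
\begin{equation*}
  K_{\kappa}(w) = \bigcap_{n} K_{\kappa}^{n}(w) = \bigcap_{n} \bigcap_{\kappa'<\kappa} K_{\kappa'}^{n}(w) = \bigcap_{\kappa'<\kappa} K_{\kappa'}(w).
\end{equation*}
Picking any sequence $\kappa_{m} \nearrow \kappa$, the sets $K_{\kappa_{m}}(w)$ are decreasing with intersection $K_{\kappa}(w)$, so continuity of measure gives $|K_{\kappa}(w)| = \lim_{m} |K_{\kappa_{m}}(w)|$; by monotonicity in~$\kappa$ this is the left limit $\lim_{\kappa'\nearrow\kappa}|K_{\kappa'}(w)|$. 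For the $\limsup$, the right limit $\lim_{\kappa'\searrow\kappa}|K_{\kappa'}(w)|$ exists by monotonicity and is bounded above by $|K_{\kappa}(w)|$, so combining with the left limit gives $\limsup_{\kappa'\to\kappa}|K_{\kappa'}(w)| = |K_{\kappa}(w)|$.

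The only place that requires real care is the passage from approximants to the limit in~(2), since one must justify interchanging the limit with the integral; after that, (3) and (4) are short consequences of monotonicity plus the trivial bound $d_{w}^{K}(x)\le|K|$, which I expect to be the conceptual pivot of the whole lemma.
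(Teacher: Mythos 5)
Your proposal is correct and follows essentially the same route as the paper: continuity of measure from above for (1), the trivial bound $d_{w}^{K}(x)\le|K|$ for (2)--(3), and Lemma~\ref{lem:properties-Kn}(4) combined with continuity of measure along a sequence $\kappa_{m}\nearrow\kappa$ for (4). The only cosmetic difference is in (2), where you pass to the limit set via dominated convergence to get $d_{w}^{K_{\kappa}(w)}(x)\ge\kappa$, whereas the paper stays at a finite stage, observing that $|K_{\kappa}(w)|<\kappa$ would give $|K_{\kappa}^{n}(w)|<\kappa$ and hence $K_{\kappa}^{n+1}(w)=\emptyset$, which delivers (2) and (3) in one stroke; both arguments are sound.
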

\begin{proof}
  (1) follows from $K_{\kappa}^{n+1}(w)\subseteq K_{\kappa}^{n}(w)$ and the monotone convergence
  theorem. %
  It follows that, if $|K_{\kappa}(w)|<\kappa$, then $|K_{\kappa}^{n}(w)|<\kappa$ for some~$n$.  Then, since
  $w(x,y)\le1$ for all~$x,y\in[0,1]$,
  \begin{equation*}
    d_{w}^{K_{\kappa}^{n}(w)}(x) = \int_{K_{\kappa}^{n}(w)}w(x,y)\mathrm{d}y
    \le |K_{\kappa}^{n}(w)| < \kappa
  \end{equation*}
  for all~$x\in K_{\kappa}^{n}(w)$, whence $K_{\kappa}^{n+1}(w)=\emptyset$.  From this, (2) and (3) follow.

  The last statement follows from Lemma~\ref{lem:properties-Kn}(4).
\end{proof}

As Example~\ref{ex:min-graphon} shows, in general, the sequence $K_{\kappa}^{n}(w)$ for fixed $\kappa$ does not
stabilize after a finite number of steps.

\begin{lemma}
  \label{lem:properties-d} $ $
  \begin{enumerate}
  \item The function $x\mapsto \delta_{x}(w)$ is measurable.
  \item $\left|\big\{ x\in[0,1] : d_{w}(x)\ge\delta(w)\big\}\right|\ge|K_{\delta(w)}|\ge\delta(w)$.
  \item $\inf_{x\in[0,1]} d_{w}(x) \le \delta(w) \le \sup_{x\in[0,1]} d_{w}(x)$.
  \end{enumerate}
\end{lemma}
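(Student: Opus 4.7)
My plan is to dispatch the three items in turn, using the structural properties of the cores established in Lemmas~\ref{lem:properties-Kn} and~\ref{lem:properties-K}.

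For~(1), the idea is to express the strict super-level sets of $\delta_{x}(w)$ in terms of the cores.  Since $\delta_{x}(w)>\kappa$ holds if and only if there is some $\kappa'>\kappa$ with $x\in K_{\kappa'}(w)$, the monotonicity from Lemma~\ref{lem:properties-Kn}(3) lets me replace this uncountable union by a countable one indexed by rationals:
\begin{equation*}
  \{x : \delta_{x}(w)>\kappa\} \;=\; \bigcup_{q\in\mathbb{Q},\, q>\kappa} K_{q}(w).
\end{equation*}
Each $K_{q}(w)$ is measurable by Lemma~\ref{lem:properties-Kn}(1), so $\delta_{x}(w)$ is measurable.

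For~(2), I would handle the two inequalities separately.  For the right-hand inequality $|K_{\delta(w)}(w)|\ge\delta(w)$, I intend to combine Lemma~\ref{lem:properties-K}(2) with the upper semi-continuity from Lemma~\ref{lem:properties-K}(4).  Given any $\kappa'<\delta(w)$, the supremum definition of $\delta(w)$ produces $\kappa''\in(\kappa',\delta(w))$ with $|K_{\kappa''}(w)|>0$; monotonicity then gives $|K_{\kappa'}(w)|\ge|K_{\kappa''}(w)|>0$, so Lemma~\ref{lem:properties-K}(2) yields $|K_{\kappa'}(w)|\ge\kappa'$, and passing to the limit $\kappa'\nearrow\delta(w)$ finishes the bound.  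For the left-hand inequality, the plan is to prove the containment $K_{\delta(w)}(w)\subseteq\{x : d_{w}(x)\ge\delta(w)\}$: for $x$ in the core, $x\in K_{\delta(w)}^{n+1}(w)$ for every~$n$, so $d_{w}^{K_{\delta(w)}^{n}(w)}(x)\ge\delta(w)$; monotone convergence as $K_{\delta(w)}^{n}(w)\searrow K_{\delta(w)}(w)$ passes this to $d_{w}^{K_{\delta(w)}(w)}(x)\ge\delta(w)$, and enlarging the integration domain to $[0,1]$ only increases the integral.

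For~(3), the upper bound $\delta(w)\le\sup_{x} d_{w}(x)$ is immediate from~(2): if $\delta(w)>0$, then $\{x : d_{w}(x)\ge\delta(w)\}$ has positive measure and is therefore non-empty, witnessing the bound.  The lower bound $\inf_{x} d_{w}(x)\le\delta(w)$ comes from a short induction: whenever $\kappa\le\inf_{x} d_{w}(x)$, every $x$ has $d_{w}(x)\ge\kappa$, so $K_{\kappa}^{1}(w)=[0,1]$; assuming $K_{\kappa}^{n}(w)=[0,1]$ one has $d_{w}^{K_{\kappa}^{n}(w)}(x)=d_{w}(x)\ge\kappa$, whence $K_{\kappa}^{n+1}(w)=[0,1]$.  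Hence $|K_{\kappa}(w)|=1$ and $\delta(w)\ge\kappa$.  I do not anticipate any real obstacle; the only delicate points are the $\delta(w)=0$ edge case (which collapses each inequality trivially) and the monotone-convergence step in~(2), both of which are routine.
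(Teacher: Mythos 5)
Your proof is correct and follows the paper's overall strategy (everything is reduced to measurability and monotonicity of the cores together with Lemma~\ref{lem:properties-K}), with a few small deviations worth noting. For (1) the paper uses the identity $\{x:\delta_{x}(w)\ge\kappa\}=K_{\kappa}(w)$, whose nontrivial inclusion rests on Lemma~\ref{lem:properties-Kn}(4); your countable union $\{x:\delta_{x}(w)>\kappa\}=\bigcup_{q\in\mathbb{Q},\,q>\kappa}K_{q}(w)$ needs only monotonicity and measurability of the cores, so it is marginally lighter in its prerequisites, and it is valid. For (2) you spell out the limiting argument ($|K_{\kappa'}(w)|\ge\kappa'$ for every $\kappa'<\delta(w)$, then upper semi-continuity from Lemma~\ref{lem:properties-K}(4)) that the paper compresses into ``follows from Lemma~\ref{lem:properties-K}''; your proof of the first inequality works but is more roundabout than necessary, since $K_{\delta(w)}(w)\subseteq K^{1}_{\delta(w)}(w)=\{x:d_{w}(x)\ge\delta(w)\}$ already gives the containment without any monotone-convergence step. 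For (3) the paper proves the upper bound directly from $K^{1}_{\kappa}(w)=\emptyset$ whenever $\kappa>\sup_{x}d_{w}(x)$, whereas you deduce it from (2); both are fine, yours trades a direct core argument for a one-line corollary at the harmless cost of making (3) depend on (2).
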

\begin{proof}
  For each~$\kappa\in[0,1]$,
  $ 
    \delta_{w}^{-1}([\kappa,1]) = K_{\kappa}(w)
  $ 
  is measurable.  This implies (1). 

  The first inequality in (2) follows from $d_{w}(x)\ge\delta(w)$ for all $x\in K_{\delta(w)}$.  The second inequality
  follows from Lemma~\ref{lem:properties-K}.

  For (3) observe that if $\kappa\le\inf_{x\in[0,1]} d_{w}(x)$, then $[0,1]= K_{\kappa}(w)$.  If $\kappa >
  \sup_{x\in[0,1]} d_{w}(x)$, then $K^{1}_{\kappa}(w)=\emptyset$.
\end{proof}

\begin{lemma}
  \label{lem:definition-d}
  For any~$\kappa>0$ and $x\in[0,1]$,
  \begin{gather*}
    \delta_{x}(w) = \max\big\{\kappa : x\in K_{\kappa}(w)\big\}, \\
    \delta(w) = \max\big\{\kappa : K_{\kappa}(w) \neq\emptyset \big\} = \max\big\{\delta_{x}(w) : x\in[0,1]\big\}.
  \end{gather*}
\end{lemma}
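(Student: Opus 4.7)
The plan is to upgrade each supremum in the three statements to a maximum by showing the defining set contains its own supremum, and then to combine these facts to derive the third equality.  A useful preliminary is the identity $K_{\kappa}(w) = \bigcap_{\kappa'<\kappa} K_{\kappa'}(w)$, which I would obtain from Lemma~\ref{lem:properties-Kn}(4) by interchanging the intersections over $n$ and over~$\kappa'$.

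For the first identity, I would fix $x$ and set $m := \delta_{x}(w)$.  For any $\kappa' < m$, the definition of the supremum yields some $\kappa''$ with $\kappa' < \kappa'' \le m$ and $x \in K_{\kappa''}(w)$, and by the monotonicity in Lemma~\ref{lem:properties-Kn}(3), $x \in K_{\kappa'}(w)$.  Thus $x \in \bigcap_{\kappa'<m}K_{\kappa'}(w) = K_{m}(w)$, so the supremum is attained.

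For the identity $\delta(w) = \max\{\kappa : K_{\kappa}(w)\neq\emptyset\}$, the key is Lemma~\ref{lem:properties-K}(3), which gives the equivalence $K_{\kappa}(w)\neq\emptyset \iff |K_{\kappa}(w)|>0$, so the sets underlying the two suprema agree.  To see that the sup is attained (assuming $\delta(w)>0$), I would combine upper semi-continuity (Lemma~\ref{lem:properties-K}(4)) with the bound $|K_{\kappa'}(w)| \ge \kappa'$ from Lemma~\ref{lem:properties-K}(2), which applies to every $\kappa'<\delta(w)$ since the definition of $\delta(w)$ as a supremum forces $|K_{\kappa'}(w)|>0$:
\begin{equation*}
  |K_{\delta(w)}(w)| = \lim_{\kappa' \to \delta(w)^{-}}|K_{\kappa'}(w)| \ge \delta(w) > 0.
\end{equation*}
The case $\delta(w)=0$ is trivial since $K_{0}(w)=[0,1]$, so $K_{\delta(w)}(w)\neq\emptyset$ in every case.

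Finally, I would observe that $\delta_{x}(w) \le \delta(w)$ for every $x$: if $x\in K_{\kappa}(w)$ then $K_{\kappa}(w)\neq\emptyset$, whence $\kappa\le\delta(w)$ by the previous paragraph.  Conversely, any $x\in K_{\delta(w)}(w)$ satisfies $\delta_{x}(w) = \delta(w)$ by the first identity, which simultaneously exhibits the maximum and identifies it with $\delta(w)$.  The main subtlety throughout is verifying that each sup is actually achieved; this is where the upper semi-continuity and the measure-zero-implies-empty dichotomy of Lemma~\ref{lem:properties-K} do the real work, while the rest is bookkeeping.
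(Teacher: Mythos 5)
Your proof is correct, and for the attainment of the degeneracy supremum it takes a genuinely different (and arguably cleaner) route than the paper. For $\delta_{x}$ both you and the paper use Lemma~\ref{lem:properties-Kn}(4) in the same way, via $K_{\kappa}(w)=\bigcap_{\kappa'<\kappa}K_{\kappa'}(w)$. For $\delta(w)$, however, the paper argues contrapositively: if $K_{\kappa}(w)=\emptyset$, then by Lemma~\ref{lem:properties-K}(3) already $K_{\kappa}^{n}(w)=\emptyset$ for some finite $n$, and then Lemma~\ref{lem:properties-Kn} yields some $\kappa'<\kappa$ with $|K_{\kappa'}^{n}(w)|<\kappa'$, hence $K_{\kappa'}(w)=\emptyset$ as well, so an empty core at $\kappa$ forces empty cores strictly below $\kappa$ and the supremum must be attained. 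You instead argue directly: monotonicity gives $|K_{\kappa'}(w)|>0$ for all $\kappa'<\delta(w)$, Lemma~\ref{lem:properties-K}(2) upgrades this to $|K_{\kappa'}(w)|\ge\kappa'$, and upper semi-continuity (Lemma~\ref{lem:properties-K}(4)) then gives $|K_{\delta(w)}(w)|\ge\delta(w)>0$. Your version uses parts (2) and (4) of Lemma~\ref{lem:properties-K} rather than part (3) and the finite-stabilization phenomenon, and it has the small bonus of reproving $|K_{\delta(w)}(w)|\ge\delta(w)$ (cf.\ Lemma~\ref{lem:properties-d}(2)) along the way; the paper's contrapositive argument, by contrast, exposes the finite-step collapse of empty cores, which is used again elsewhere. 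You also spell out the final equality $\delta(w)=\max\{\delta_{x}(w)\}$, which the paper leaves implicit. The only cosmetic caveat is that your appeal to $\bigcap_{\kappa'<m}K_{\kappa'}(w)=K_{m}(w)$ needs $m>0$ (Lemma~\ref{lem:properties-Kn}(4) is stated for $\kappa>0$); the case $m=\delta_{x}(w)=0$ is trivial since $K_{0}(w)=[0,1]$, just as you note for $\delta(w)=0$.
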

\begin{proof}
  By Lemma~\ref{lem:properties-K}, the conditions $|K_{\kappa}(w)|=0$ and $K_{\kappa}(w)=\emptyset$ are equivalent.  It
  remains to show that the suprema in the definitions of~$\delta_{x}$ and~$\delta$ are actually maxima.  For
  $\delta_{x}$, this follows from statement~4 in Lemma~\ref{lem:properties-Kn}.  For $\delta$, suppose that
  $K_{\kappa}(w) = \emptyset$.  By Lemma~\ref{lem:properties-K}, $K_{\kappa}^{n}(w)=\emptyset$ for some~$n$.  By
  Lemma~\ref{lem:properties-Kn}, there exists $0<\kappa'<\kappa$ with $|K_{\kappa'}^{n}(w)|<\kappa'$, whence
  $K_{\kappa'}(w)=\emptyset$.
\end{proof}

\begin{lemma}
  Let $\sigma$ be a Measure-preserving transformation, and let $w$ be a graphon.
  \begin{enumerate}
  \item $K^{n}_{\kappa}(w^{\sigma}) = \sigma^{-1}(K^{n}_{\kappa}(w))$
  \item $K^{n}_{\kappa}(w) \subseteq \sigma(K^{n}_{\kappa}(w^{\sigma}))$, with
    $|\sigma(K^{n}_{\kappa}(w^{\sigma})) \setminus K^{n}_{\kappa}(w)| = 0$.
  \item $\delta_{x}(w^{\sigma}) = \delta_{\sigma(x)}(w)$ for all~$x\in[0,1]$.
  \item $\delta(w^{\sigma}) = \delta(w)$.
  \end{enumerate}
\end{lemma}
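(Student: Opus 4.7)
My plan would be to prove (1) by induction on $n$ and then derive (2)--(4) formally from it. For the base case $n=1$, the change-of-variables formula for a measure-preserving $\sigma$ gives
\begin{equation*}
  d_{w^{\sigma}}(x) = \int_{0}^{1} w(\sigma(x),\sigma(y))\,\mathrm{d}y = \int_{0}^{1} w(\sigma(x), z)\,\mathrm{d}z = d_{w}(\sigma(x)),
\end{equation*}
so $K^{1}_{\kappa}(w^{\sigma}) = \sigma^{-1}(K^{1}_{\kappa}(w))$. For the inductive step, the same change-of-variables identity applied on a restricted domain (via $\int_{\sigma^{-1}(B)}(f\circ\sigma) = \int_{B} f$) yields $d_{w^{\sigma}}^{K^{n}_{\kappa}(w^{\sigma})}(x) = d_{w}^{K^{n}_{\kappa}(w)}(\sigma(x))$ once the inductive hypothesis $K^{n}_{\kappa}(w^{\sigma}) = \sigma^{-1}(K^{n}_{\kappa}(w))$ is substituted as the domain of integration. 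The defining condition for $K^{n+1}_{\kappa}$ then transports through $\sigma$ without fuss.

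Parts (3) and (4) are immediate bookkeeping once (1) is in place. Intersecting (1) over $n$, and using that $\sigma^{-1}$ commutes with intersections, gives $K_{\kappa}(w^{\sigma}) = \sigma^{-1}(K_{\kappa}(w))$. For (3) this says $x \in K_{\kappa}(w^{\sigma})$ iff $\sigma(x) \in K_{\kappa}(w)$, and taking the supremum over $\kappa$ (invoking Lemma~\ref{lem:definition-d} so that the supremum is realized) gives $\delta_{x}(w^{\sigma}) = \delta_{\sigma(x)}(w)$. For (4), measure preservation yields $|K_{\kappa}(w^{\sigma})| = |\sigma^{-1}(K_{\kappa}(w))| = |K_{\kappa}(w)|$ for every $\kappa$, so the degeneracies coincide.

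Part (2) is the only set-theoretically delicate item. From (1), $\sigma(K^{n}_{\kappa}(w^{\sigma})) = \sigma(\sigma^{-1}(K^{n}_{\kappa}(w)))$, and the trivial inclusion $\sigma(\sigma^{-1}(A)) \subseteq A$ immediately yields the null-set claim $|\sigma(K^{n}_{\kappa}(w^{\sigma})) \setminus K^{n}_{\kappa}(w)| = 0$. The reverse containment $K^{n}_{\kappa}(w) \subseteq \sigma(K^{n}_{\kappa}(w^{\sigma}))$ is to be read up to a null set: its failure is confined to $[0,1] \setminus \sigma([0,1])$, and any measurable subset of that complement has empty $\sigma$-preimage, hence measure zero by measure preservation.

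The main obstacle is setting up the change-of-variables identity cleanly in (1), since every subsequent assertion is anchored on $K^{n}_{\kappa}(w^{\sigma}) = \sigma^{-1}(K^{n}_{\kappa}(w))$. Once that is secured, the inductive step, parts (3) and (4), and the null-set argument in (2) are all essentially formal.
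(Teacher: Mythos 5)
Your proposal is correct and follows essentially the same route as the paper: prove $K^{n}_{\kappa}(w^{\sigma})=\sigma^{-1}(K^{n}_{\kappa}(w))$ by induction on $n$ via the change-of-variables identity for measure-preserving maps, deduce (3) and (4) by intersecting over $n$ and using measure preservation, and handle (2) through $\sigma(\sigma^{-1}(A))\subseteq A$ together with the fact that $[0,1]\setminus\sigma([0,1])$ is null. Your reading of the containment in (2) as holding only up to a null set (since $\sigma$ need not be surjective) is in fact slightly more careful than the paper's own wording.
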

\begin{proof}
  Statements (1), (3) and (4) follow by induction on~$n$, using the following equivalence:
  \begin{align*}
    d^{K}_{w^{\sigma}}(x)\ge\kappa
    &\Longleftrightarrow \int_{K}w(\sigma(x),\sigma(y))\mathrm{d}y \ge\kappa
    \\
    &\Longleftrightarrow \int_{\sigma^{-1}(K)}w(\sigma(x),y)\mathrm{d}y \ge\kappa
    \Longleftrightarrow d^{\sigma^{-1}(K)}_{w}(\sigma(x))\ge\kappa
  \end{align*}
  that holds for all $\kappa\in[0,1]$ and $K\subseteq[0,1]$.  The first part of (2) follows from~(1), since
  $K^{n}_{\kappa}(w) \subseteq \sigma(\sigma^{-1}(K^{n}_{\kappa}(w))) = \sigma(K^{n}_{\kappa}(w^{\sigma}))$.

  By (3), if $x\in \sigma(K^{n}_{\kappa}(w^{\sigma})) \setminus K^{n}_{\kappa}(w)$, then $x\notin\sigma([0,1])$.  Thus,
  $\sigma(K^{n}_{\kappa}(w^{\sigma})) \setminus K^{n}_{\kappa}(w) \subseteq [0, 1] \setminus \sigma([0,1])$, which is a
  set of measure zero.
\end{proof}

\section{Continuity of the degeneracy}
\label{sec:continuity}

\begin{lemma}
  \label{lem:ineq}
  $|\delta(w) - \delta(w')|\le 2\sqrt{\delta_{\square}(w,w')}$.
\end{lemma}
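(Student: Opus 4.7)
The plan is to prove that $d_\square(w, w') \le \epsilon$ implies $|\delta(w) - \delta(w')| \le 2\sqrt{\epsilon}$; since the preceding lemma gives $\delta(w^\sigma) = \delta(w)$ for every measure-preserving $\sigma$, taking the infimum over such $\sigma$ then yields the stated bound involving $\delta_\square$. By symmetry I may assume $\delta := \delta(w) \ge \delta(w')$, and aim to exhibit a measurable set $A \subseteq [0,1]$ of positive measure with $d_{w'}^A(x) \ge \delta - 2\sqrt{\epsilon}$ for every $x \in A$; by the maximality in the definition of the core, this forces $A \subseteq K_{\delta - 2\sqrt{\epsilon}}(w')$ and hence $\delta(w') \ge \delta - 2\sqrt{\epsilon}$. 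The case $\delta \le 2\sqrt{\epsilon}$ is trivial because $\delta(w') \ge 0$, so I may also assume $\delta > 2\sqrt{\epsilon}$.

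The natural starting point is $K := K_\delta(w)$ itself, which has $|K| \ge \delta$ by Lemma~\ref{lem:properties-K}(2) and satisfies $d_w^K(x) \ge \delta$ for every $x \in K$. For a parameter $t \in (0,\delta)$ to be optimized later, set
\[
  A := \{ x \in K : d_{w'}^K(x) \ge \delta - t \}, \qquad B := K \setminus A.
\]
On $B$ one has $d_w^K(x) - d_{w'}^K(x) > t$, so integrating $x$ over $B$ and applying the cut-norm estimate to the rectangle $B \times K$ yields $t|B| \le \int_B \int_K (w - w') \le \epsilon$, hence $|B| \le \epsilon/t$.

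The subtlety is that $A$ inherits only a bound on $d_{w'}^K$, not on $d_{w'}^A$, so one must discount the loss from restricting the integration domain from $K$ to $A$. Using $w' \le 1$, for every $x \in A$ one obtains $d_{w'}^A(x) \ge d_{w'}^K(x) - |B| \ge \delta - t - \epsilon/t$. Setting $t = \sqrt{\epsilon}$ balances the two losses, giving $d_{w'}^A(x) \ge \delta - 2\sqrt{\epsilon}$ together with $|A| \ge |K| - \epsilon/t \ge \delta - \sqrt{\epsilon} > 0$; measurability of $A$ is automatic because $x \mapsto d_{w'}^K(x)$ is measurable. This is the required witness, and the argument uses nothing beyond the lemmas already established. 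The only nontrivial bookkeeping is the passage from $d_{w'}^K$ to $d_{w'}^A$ via the size of $B$, and the resulting optimization that produces the factor $2\sqrt{\epsilon}$.
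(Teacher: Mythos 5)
Your proposal is correct and follows essentially the same route as the paper: your set $A$ is exactly the paper's $K' = K \setminus S_{\epsilon}$, your bound $|B|\le \epsilon/t$ is the paper's bound on $|S_{\epsilon}|$ via the cut norm on the rectangle $S_\epsilon\times K$, and your optimization $t=\sqrt{\epsilon}$ reproduces the paper's choice $\epsilon=\sqrt{d_{\square}(w,w')}$ (the degenerate case $d_{\square}(w,w')=0$ is dismissed by assumption in the paper just as implicitly as in your write-up). No substantive difference.
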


\begin{proof}
  Recall that $\delta_{\square}(w,w') = \inf_{\sigma} d_{\square}(w,\sigma w')$, where $\sigma$ runs over all invertible
  measure-preserving transformations and where
  \begin{equation*}
    d_{\square}(w,w') = \sup_{S,T\subseteq[0,1]} \Big|\int_{S}\int_{T}\big( w(x,y)-w'(x,y)\big)\mathrm{d}x \mathrm{y}\Big|.
  \end{equation*}
  Since measure-preserving transformations preserve the degeneracy~$\delta$, it suffices to show that
  \begin{equation*}
    |\delta(w) - \delta(w')| \le 2 \sqrt{d_{\square}(w,w')}.
  \end{equation*}
  We may assume that $\epsilon:=\sqrt{d_{\square}(w,w')}>0$.  By symmetry, it is enough to show that $\delta(w') \ge
  \delta(w) - 2 \sqrt{d_{\square}(w,w')}$.  Since $\delta(w')\ge0$, we may assume that 
  $\epsilon=\sqrt{d_{\square}(w,w')}<\frac12\delta(w)<\delta(w)$.

  Let $K$ be the $\delta(w)$-core of~$w$, and let
  \begin{equation*}
    S_{\epsilon} :=
    \Big\{
    x\in K: d^{K}_{w'}(x) < \delta(w) - \epsilon
    \Big\}.
  \end{equation*}
  Then
  \begin{equation*}
    d_{\square}(w,w') \ge \int_{S_{\epsilon}}\mathrm{d}x\int_{K}\mathrm{d}y\big(w(x,y) - w'(x,y)\big)
    > |S_{\epsilon}|\delta(w) - |S_{\epsilon}|(\delta(w)-\epsilon) = \epsilon |S_{\epsilon}|,
  \end{equation*}
  and so
  \begin{equation*}
    |S_{\epsilon}| < \frac{d_{\square}(w,w')}{\epsilon} = \sqrt{d_{\square}(w,w')} < \delta(w) \le |K|.
  \end{equation*}

  Let $K':= K\setminus S_{\epsilon}$.  Then $|K'|>0$.  Moreover, for any $x\in K'$,
  \begin{multline*}
    \int_{K'} w'(x,y)\mathrm{d}y = \int_{K} w'(x,y)\mathrm{d}y - \int_{S_{\epsilon}} w'(x,y)\mathrm{d}y
    \\
    \ge d^{K}_{w'}(x) - |S_{\epsilon}| > (\delta(w) - \epsilon) - \frac{d_{\square}(w,w')}{\epsilon} = \delta(w) -
    2\sqrt{d_{\square}(w,w')}.
  \end{multline*}
  This implies that $K'$ is part of the $(\delta(w)-2\sqrt{d_{\square}(w,w')})$-core.  Thus,
  \begin{equation*}
    \delta(w') \ge \delta(w) - 2\sqrt{d_{\square}(w,w')}. \qedhere
  \end{equation*}
\end{proof}

Lemma~\ref{lem:ineq} shows that the degeneracy is a continuous graph invariant.  The square root suggests that the
degeneracy is Hölder continuous, but not Lipschitz continuous.  The following example illustrates this:
\begin{example}
  Let $a,b,\alpha\in(0,1)$.  %
  Let $w(x,y) = a$ be the constant graphon, and let
  \begin{equation*}
    w'(x,y) =
    \begin{cases}
      b, & \text{ if }x\le\alpha,y\le\alpha, \\
      a, & \text{ otherwise.}
    \end{cases}
  \end{equation*}
  Since $w$ is invariant under all measure-preserving transformations,
  \begin{equation*}
    \delta_{\square}(w,w')
    = d_{\square}(w,w')
    = \alpha^{2}|a-b|.
  \end{equation*}
  It suffices to compute
  \begin{equation}
    \label{eq:delta-wp}
    \delta(w') =
    \begin{cases}
      (1-\alpha)a+\alpha b, & \text{ if }b<a, \\
      \max\{a, \alpha b\}, & \text{ if }b\ge a.
    \end{cases}
  \end{equation}
  This shows that $\delta(w')-\delta(w)$ varies linearly with~$\alpha$ for $b<a$, while $\delta_{\square}(w,w')$ is
  quadratic in~$\alpha$.

  It remains to prove~\eqref{eq:delta-wp}.  Since
  \begin{equation*}
    d_{w'}(x) =
    \begin{cases}
      (1-\alpha)a+\alpha b, & \text{ if }x\le\alpha, \\
      a, & \text{ if }x>\alpha,
    \end{cases}
  \end{equation*}
  it follows that $K_{\min\{(1-\alpha)a+\alpha b,a\}}=[0,1]$, whence $\delta(w')\ge\min\{(1-\alpha)a+\alpha b,a\}$.
  Assume that $b<a$, and let $\kappa>(1-\alpha)a+\alpha b$.  Then $K^{1}_{\kappa}=[\alpha,1]$ and
  $d^{K^{1}_\kappa(w')}(x)\le(1-\alpha)a<\kappa$ for all~$x$.  Hence,
  $K_{\kappa}^{2}(w')=\emptyset=K_{\kappa}(w')$, which proves that $\delta(w')=(1-\alpha)a+\alpha b$ in this case.

  If $b\ge a$, then $\delta(w')\ge a$.  Since $[0,\alpha]\subseteq K_{\alpha b}(w')$, it follows that
  $\delta(w')\ge\alpha b$.  If $\kappa>a$, then $K^{1}_{\kappa}(w')\subseteq[0,\alpha]$, whence
  $d^{K^{1}_{\kappa}(w')}_{w'}(x)\le\alpha b$.  Thus, if $\kappa>\max\{\alpha b,a\}$, then
  $K^{2}_{\kappa}(w')=\emptyset$, whence $\delta(w')\le\max\{a,\alpha b\}$.
\end{example}

\section{Degeneracy and edge density}

For any graphon~$w$, let
\begin{equation*}
  e(w) = \iint_{[0,1]^{2}}\mathrm{d}x\,\mathrm{d}y\, w(x,y)
\end{equation*}
be the \emph{edge density} of~$w$.
\begin{lemma}
  \label{lem:lower-bound-e}
  $e(w)\ge\delta(w)^{2}$, with equality if and only if $\delta_{\square}(w,w')=0$, where
  \begin{equation*}
    w'(x,y)=
    \begin{cases}
      1, & \text{ if }x,y\le\delta(w), \\
      0, & \text{ otherwise.}
    \end{cases}
  \end{equation*}
\end{lemma}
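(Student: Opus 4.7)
My plan is to prove the inequality by restricting the double integral to the $\delta$-core and then invoking Lemma~\ref{lem:properties-d}(2). Writing $\delta := \delta(w)$ and $K := K_{\delta}(w)$, every $x \in K$ satisfies $d_{w}^{K}(x) \ge \delta$ by definition of the $\delta$-core, so since $w \ge 0$ I would chain
\[
  e(w) \;=\; \iint_{[0,1]^{2}} w \;\ge\; \iint_{K \times K} w \;=\; \int_{K} d_{w}^{K}(x)\, \mathrm{d}x \;\ge\; \delta\,|K| \;\ge\; \delta^{2},
\]
where the last inequality uses $|K| \ge \delta$ from Lemma~\ref{lem:properties-d}(2).

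For the equality direction, I would trace this chain backwards. If $e(w) = \delta^{2}$ then each inequality above must be tight, forcing (a)~$w = 0$ almost everywhere on $[0,1]^{2} \setminus (K \times K)$; (b)~$d_{w}^{K}(x) = \delta$ for almost every $x \in K$; (c)~$|K| = \delta$. From (b), (c) and $w \le 1$ I get $\int_{K} w(x,y)\,\mathrm{d}y = |K|$ for almost every $x \in K$, which by Fubini forces $w = 1$ almost everywhere on $K \times K$. Thus $w$ coincides almost everywhere with the indicator of $K \times K$.

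To obtain $\delta_{\square}(w,w') = 0$ I would rearrange $K$ onto $[0,\delta]$: since $|K| = |[0,\delta]| = \delta$, the classical isomorphism theorem for atomless standard Borel probability spaces provides measure-preserving bijections $[0,\delta] \to K$ and $[\delta,1] \to [0,1] \setminus K$ modulo null sets, which glue to a measure-preserving $\sigma:[0,1] \to [0,1]$ with $\sigma^{-1}(K) = [0,\delta]$ modulo null sets. Then $w^{\sigma} = w'$ almost everywhere, so $d_{\square}(w^{\sigma},w') = 0$ and hence $\delta_{\square}(w,w') = 0$. Conversely, if $\delta_{\square}(w,w') = 0$, then for every measure-preserving $\sigma$ the choice $S = T = [0,1]$ in the definition of $d_{\square}$ gives $|e(w^{\sigma}) - e(w')| \le d_{\square}(w^{\sigma},w')$; since $e(w^{\sigma}) = e(w)$, taking the infimum over $\sigma$ yields $|e(w) - e(w')| \le \delta_{\square}(w,w') = 0$, and a direct computation gives $e(w') = \delta^{2}$.

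The main obstacle is the measurable rearrangement step that realizes $w$ as $w'$ up to a measure-preserving transformation. Though classical, it must be invoked with some care: the definition of $\delta_{\square}$ permits non-injective measure-preserving maps, but here the equivalence is most naturally produced via a genuine measure isomorphism between the atomless probability spaces $K$ and $[0,\delta]$ (and their complements in $[0,1]$).
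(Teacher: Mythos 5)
Your proposal is correct and follows essentially the same route as the paper's proof: the chain $e(w)\ge\iint_{K\times K}w=\int_K d_w^K\ge\delta|K|\ge\delta^2$ using $|K_{\delta(w)}|\ge\delta(w)$, the equality analysis forcing $w$ to be (a.e.) the indicator of $K\times K$ with $|K|=\delta(w)$, and a measure-preserving rearrangement of $K$ onto $[0,\delta(w)]$. You are merely more explicit than the paper on two points it leaves implicit --- the construction of $\sigma$ via the isomorphism theorem for atomless spaces and the converse implication $\delta_{\square}(w,w')=0\Rightarrow e(w)=\delta(w)^2$ --- which is a refinement rather than a different argument.
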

This lemma follows from the corresponding result about graphs (see
\citet[Proposition~3.1]{KWPR16:Geometry_of_Degeneracy_Model}).  It is also insightfull to prove it directly, mimicking
the proof of the result for graphs.
\begin{proof}
  Let $w$ be a graphon with $\delta(w)$-core~$K$.  The restricted graphon
  \begin{equation*}
    w_{K}(x,y) :=
    \begin{cases}
      w(x,y), & \text{ if }x,y\in K, \\
      0, & \text{ otherwise,}
    \end{cases}
  \end{equation*}
  has the same degeneracy as~$w$, the same $\delta(w)$-core and satisfies $e(w_{K})\le e(w)$, with strict inequality if
  the set of pairs $x,y\in[0,1]\setminus K$ with $w(x,y)>0$ has measure larger than zero.  Hence, without loss of
  generality, we may assume that~$w=w_{K}$; that is, $w(x,y)=0$ if $x\notin K$ or $y\notin K$.

  By Lemma~\ref{lem:properties-d},
  \begin{equation*}
    e(w) = \int_{K}\mathrm{d}x\int_{0}^{1}\mathrm{d}y\, w(x,y)
    = \int_{K}\mathrm{d}x \, d_{w}(x) \ge |K|\delta(w) \ge \delta(w)^{2}.
  \end{equation*}
  Equality holds if and only if (i) $|K|=\delta(w)$ and (ii) $d_{w}(x)=\delta(w)$ for almost all~$x\in K$.  Condition
  (i) implies $w(x,y)=1$ for all~$x,y\in K$.  There is a measure-preserving transformation $\sigma:[0,1]\to[0,1]$ with
  $\sigma(K) = [0,\delta(w)]$ and $K = \sigma^{-1}([0,\delta(w)])$.
  Then $w = (w')^{\sigma}$,  
  with $w'$ as in the statement of the lemma.
\end{proof}

\begin{lemma}
  \label{lem:upper-bound-e}
  $e(w)\le\delta(w)(2 - \delta(w))$.
  For any~$\delta\in(0,1)$, equality holds for the graphon
  \begin{equation*}
    w_{\delta}(x,y) =
    \begin{cases}
      1, & \text{ if } \max\{x,y\} \ge 1 - \delta, \\
      0, & \text{ otherwise.}
    \end{cases}
  \end{equation*}
  (with $\delta(w_{\delta}) = \delta$).
\end{lemma}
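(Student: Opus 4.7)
The plan is to reduce the inequality to the finite graph bound~\eqref{eq:KWPR-inequalities} by approximation, as hinted in the introduction. First, I would set up the graph/graphon dictionary: for a finite graph $G$ on $n$ vertices with associated step graphon $w_G$ (constant on the grid $\{((i-1)/n, i/n]\}_{i=1}^n$), one trivially has $e(w_G) = 2|E(G)|/n^2$, and an induction on the core iteration $K^m_\kappa$ shows that $K^m_\kappa(w_G)$ is the union of intervals $((v-1)/n, v/n]$ over the vertices $v$ in the $\lceil \kappa n \rceil$-core of $G$. Consequently $\delta(w_G) = \delta(G)/n$. Writing $\delta_n := \delta(w_G) = \delta(G)/n$ and substituting into~\eqref{eq:KWPR-inequalities}, an algebraic simplification gives
\[
e(w_G) \;\le\; \delta_n(2 - \delta_n) - \delta_n/n \;\le\; \delta_n(2 - \delta_n).
\]

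Next I would pick a sequence of finite graphs $G_n$ on growing vertex sets with $\delta_{\square}(w_{G_n}, w) \to 0$, which exists by the fundamental graphon approximation theorem (for example, $G_n \sim G(n, w)$ works almost surely). By Lemma~\ref{lem:ineq}, $\delta(w_{G_n}) \to \delta(w)$. The edge density is also $\delta_\square$-continuous: it is invariant under measure-preserving maps, and $|e(w)-e(w')| \le d_\square(w,w')$ by taking $S = T = [0,1]$ in the definition of~$d_\square$, so taking the infimum over $\sigma$ gives $|e(w)-e(w')| \le \delta_\square(w,w')$. Passing to the limit in the displayed inequality above therefore yields $e(w) \le \delta(w)(2 - \delta(w))$.

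For the equality statement, direct computation gives $e(w_\delta) = 1 - (1-\delta)^2 = \delta(2-\delta)$. To see $\delta(w_\delta) = \delta$, note that $d_{w_\delta}(x) = 1$ for $x \ge 1 - \delta$ and $d_{w_\delta}(x) = \delta$ for $x < 1 - \delta$. For $\kappa \le \delta$ the iteration stays at $[0,1]$, so $K_\kappa(w_\delta) = [0,1]$; for $\kappa > \delta$ we get $K^1_\kappa(w_\delta) = [1-\delta, 1]$, but $d^{K^1_\kappa}_{w_\delta}(x) \equiv \delta < \kappa$ on this set, so $K^2_\kappa(w_\delta) = \emptyset$.

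The main obstacle is the bookkeeping in the first step: verifying that the iterative core definition for graphons, when applied to a step graphon $w_G$, reproduces the graph core iteration on $G$ with threshold $\lceil \kappa n \rceil$, and pushing~\eqref{eq:KWPR-inequalities} through this translation cleanly. Once the dictionary is in place, the limit passage uses only the continuity of $\delta$ (Lemma~\ref{lem:ineq}) and the trivial continuity of $e$, and the equality case is a direct computation.
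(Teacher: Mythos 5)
Your proposal is correct and follows exactly the route the paper takes for this lemma: the paper deduces it from Proposition~3.1 of \citet{KWPR16:Geometry_of_Degeneracy_Model} (the finite-graph inequality~\eqref{eq:KWPR-inequalities}) and deliberately gives no direct graphon proof, relying on the transfer to graphons via the cut metric. You simply spell out the details the paper leaves implicit --- the step-graphon/core dictionary $\delta(w_G)=\delta(G)/n$, the approximation of $w$ by graphs in $\delta_\square$, the bound $|e(w)-e(w')|\le\delta_\square(w,w')$, continuity of the degeneracy from Lemma~\ref{lem:ineq}, and the computation for $w_\delta$ --- and these details check out.
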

Lemma~\ref{lem:upper-bound-e} follows from Proposition~3.1 by \citet{KWPR16:Geometry_of_Degeneracy_Model}.  Unlike for
the other results, no alternative proof will be presented, as it seems to be difficult to directly prove this result in
the graphon world.  The proof of \citeauthor{KWPR16:Geometry_of_Degeneracy_Model} relies on Proposition~3.10 by
\citet{KPPSW14:Statistics_of_core_decompositions}, which starts by ordering the nodes of a graph according to their
shell index.  Among nodes with the same shell index~$k$, nodes are ordered according to how quickly they disappear in
the sequence $(K_{k+1}^{i})_{i}$.  Generalizing this approach to graphons poses two problems.  First, in the continuous
case, a shell index $\kappa\in[0,1]$ does not have a ``successor'' $\kappa+\epsilon$, as the set of shell indices may be
infinite.  The appendix contains an example that illustrates the difficulty of ordering the nodes.
Second, instead of constructing a permutation that orders the nodes one needs to construct an invertible measure
preserving map.

\subsubsection*{Acknowledgments}

Thanks goes to Dane Wilburne, whose questions motivated the results of this paper.

\bibliographystyle{plainnat}
\bibliography{general}

\newpage{}
\appendix
\allowdisplaybreaks{}

\section{Another example}
\label{sec:another-example}

Let $\epsilon=(\epsilon_{i})_{i\in\Nb}, \epsilon'=(\epsilon'_{i})_{i\in\Nb}$ be two monotonically decreasing sequences
of positive real numbers with
\begin{equation*}
  \sum_{i\in\Nb}\epsilon_{i} = 1 = \sum_{i\in\Nb}\epsilon'_{i}.
\end{equation*}
Let $\alpha_{i}=\sum_{i'=1}^{i}\epsilon_{i'}$ and $\alpha'_{i}=\sum_{i'=1}^{i}\epsilon'_{i'}$.  Define a graphon
$w_{\epsilon,\epsilon'}(x, y)$ as follows: for all $x,y\in[0,1]$ with $x<y$, let
\begin{equation*}
  w_{\epsilon,\epsilon'}(x, y) =
  \begin{cases}
    1, & \text{ if } \frac15(1 - \alpha_{i+1}) \le x < \frac15(1 - \alpha_{i}) \\ &\qquad \text{ and } \frac15(1 - \alpha_{i}) \le y < \frac15(1 - \alpha_{i-1}), \\
    1 - \epsilon_{i-1}, & \text{ if } \frac15(1 - \alpha_{i}) \le x < \frac15(1 - \alpha_{i-1}) \\ &\qquad \text{ and } \frac15 \le y < \frac25, \\
    1, & \text{ if }  \frac15 \le x < \frac25 \text{ and } \frac25 \le y < \frac35, \\
    1, & \text{ if }  \frac25 \le x < \frac35 \text{ and } \frac35 \le y < \frac45, \\
    1 - \epsilon'_{i-1}, & \text{ if } \frac35 \le x < \frac45   \\ &\qquad \text{ and } \frac15(4 + \alpha'_{i-1}) \le y < \frac15(4 + \alpha'_{i}), \\
    1, & \text{ if } \frac15(4 + \alpha'_{i-1}) \le x < \frac15(4 + \alpha'_{i})  \\ &\qquad \text{ and } \frac15(4 + \alpha'_{i}) \le y < \frac15(4 + \alpha'_{i+1}), \\
    0, & \text{ otherwise,}
  \end{cases}
\end{equation*}
where $\epsilon_{0}=\epsilon'_{0}=0=\alpha_{0}=\alpha'_{0}$.
In a picture:
\begin{center}
  \begin{tabular}{|c|c|c|c|c|}
    \hline
    \raisebox{3mm}{
      \begin{tabular}{ccccc}
        \scriptsize$\ddots$ \\
        & \tiny        0 & \scriptsize  1 & \footnotesize0 & \small 0 \\
        & \scriptsize  1 & \footnotesize0 & \small1 &        0 \\
        & \footnotesize0 & \small       1 &       0 & \large 1 \\
        & \small       0 &              0 & \large1 & \large 0
      \end{tabular}
    }
    & \raisebox{3mm}{
      \begin{tabular}{c}
        \scriptsize $\vdots$ \\
        \small$1 - \epsilon_{3}$ \\
        $1 - \epsilon_{2}$ \\
        \large$1 - \epsilon_{1}$ \\
        \large$1$
      \end{tabular}
} &  \mbox{\Huge{}0} & \mbox{\Huge{}0} &  \mbox{\Huge{}0} \\[-2mm] &&&&\\\hline&&&&\\[-3mm]
    \raisebox{1mm}{$1 - \epsilon_{i-1}$} &  \mbox{\Huge{}\phantom{$^{0}$}0\phantom{$^{0}$}} &  \mbox{\Huge{}1} & \mbox{\Huge{}0} &  \mbox{\Huge{}0} \\[1mm]\hline&&&&\\[-3mm]
    \mbox{\Huge{}0} & \mbox{\Huge{}1} &  \mbox{\Huge{}\phantom{$^{0}$}0\phantom{$^{0}$}} &  \mbox{\Huge{}1} & \mbox{\Huge{}0} \\[1mm]\hline&&&&\\[-3mm]
    \mbox{\Huge{}0} & \mbox{\Huge{}0} & \mbox{\Huge{}1} &  \mbox{\Huge{}\phantom{$^{0}$}0\phantom{$^{0}$}} &  \raisebox{1mm}{$1 - \epsilon'_{i-1}$} \\[1mm]\hline&&&&\\[-1mm]
    \mbox{\Huge{}0} & \mbox{\Huge{}0} &  \mbox{\Huge{}0} &
    \mbox{
      \begin{tabular}{c}
        \large$1$ \\
        \large$1 - \epsilon'_{1}$ \\
        $1 - \epsilon'_{2}$ \\
        \small$1 - \epsilon'_{3}$ \\
        \scriptsize $\vdots$
      \end{tabular}
} &
    \mbox{
      \begin{tabular}{ccccc}
        \large 0 & \large 1 & \large 0 & \large 0\rlap{\phantom{$^{0}$}} \\
        \large 1 & 0 & 1 & 0 \\
        \large 0 & 1 & \small 0 & \small 1 \\
        \large 0 & 0 & \small 1 & \scriptsize 0 \\
        & & & & \scriptsize$\ddots$
      \end{tabular}
    }
    \\ \hline
  \end{tabular}
\end{center}

The calculation in the following will show that each $K^{n}_{\kappa}(w)$ is of the form
$\big[l_{n}(\kappa); u_{n}(\kappa)\big)$ or empty, with strictly monotonically increasing sequences $l_{n}(\kappa)$,
$u_{n}(\kappa)$.  Claim: \emph{It is possible to construct the sequences $\epsilon,\epsilon'$ such that for any
  $\kappa_{0}>\frac15$ there are $\kappa,\kappa'>\frac15$ with $\kappa,\kappa'<\kappa_{0}$ and
\begin{equation*}
  l_{n}(\kappa) \le \frac25 < u_{n}(\kappa) < \frac35,
  \quad\text{ and }\quad
  \frac25 < l_{n'}(\kappa) \le \frac35 < u_{n'}(\kappa)
\end{equation*}
for some $n,n'$.}  This shows that it is difficult to generalize the idea of the proof Proposition~3.10 by
\citet{KPPSW14:Statistics_of_core_decompositions} of ordering the nodes of a graph by how fast they disappear in the
sequence $K^{n}_{\kappa}$. 

One computes
\begin{equation*}
  d_{w}(x) =
  \begin{cases}
    \frac15(1 + \epsilon_{i+1}), & \text{ if } \frac15(1 - \alpha_{i}) \le x < \frac15(1 - \alpha_{i-1}), \; i\ge 1,\\
    \frac15\big(1 + \beta_{\infty}\big), & \text{ if }  \frac15 \le x < \frac25, \\
    \frac25, & \text{ if }  \frac25 \le x < \frac35, \\
    \frac15\big(1 + \beta'_{\infty}\big), & \text{ if }  \frac35 \le x < \frac45, \\
    \frac15(1 + \epsilon'_{i+1}), & \text{ if } \frac15(4 + \alpha'_{i-1}) \le x < \frac15(4 + \alpha'_{i}), \; i\ge 1,
  \end{cases}
\end{equation*}
where $\beta_{j} = \sum_{i=1}^{j}\epsilon_{i}(1 - \epsilon_{i-1})$ and
$\beta'_{j} = \sum_{i=1}^{j}\epsilon'_{i}(1 - \epsilon'_{i-1})$.  The assumptions on $\epsilon,\epsilon'$ imply
$0\le\beta_{j},\beta'_{j}\le 1$ and $\beta_{j}\le\beta_{j+1}$ and $\beta'_{j}\le\beta'_{j+1}$ for $j=1,2,\dots,\infty$.
It follows that $K_{\kappa}(w) = [0,1]$ for $\kappa\le\frac15$.

Let $\kappa>\frac15$ be such that $\kappa\le\frac15\big(1 + \min\{\beta_{1},\beta'_{1}\}\big)$.  Denote by
$i_{\kappa},i'_{\kappa}$ the smallest positive integers with $\frac15(1 + \epsilon_{i_{\kappa}+1}) < \kappa$ and
$\frac15(1 + \epsilon'_{i'_{\kappa}+1}) < \kappa$.  Then
$K_{\kappa}^{1}(w) = \big[\frac15(1 - \alpha_{i_{\kappa}-1}), \frac15(4 + \alpha'_{i'_{\kappa}-1})\big)$, and
\begin{equation*}
  d^{K_{\kappa}^{1}(w)}_{w}(x) =
  \begin{cases}
    \frac15, & \text{ if } \frac15(1 - \alpha_{i_{\kappa} - 1}) \le x < \frac15(1 - \alpha_{i_{\kappa} - 2}), \\
    \frac15(1 + \epsilon_{i+1}), & \text{ if } \frac15(1 - \alpha_{i}) \le x < \frac15(1 - \alpha_{i-1}),\; 1 \le i < i_{\kappa} - 1\\
    \frac15\big(1 + \beta_{i_{\kappa} - 1}\big), & \text{ if }  \frac15 \le x < \frac25, \\
    \frac25, & \text{ if }  \frac25 \le x < \frac35, \\
    \frac15\big(1 + \beta'_{i'_{\kappa} - 1}\big), & \text{ if }  \frac35 \le x < \frac45, \\
    \frac15(1 + \epsilon'_{i+1}), & \text{ if } \frac15(4 + \alpha'_{i-1}) \le x < \frac15(4 + \alpha'_{i}),\; 1 \le i < i'_{\kappa} - 1, \\
    \frac15, & \text{ if } \frac15(4 + \alpha_{i'_{\kappa} - 2}) \le x < \frac15(4 + \alpha_{i'_{\kappa} - 1}), 
  \end{cases}
\end{equation*}
for $x\in K_{\kappa}^{1}(w)$.

Suppose that $i_{\kappa}>1$.  If $\frac15(1 - \alpha_{i_{\kappa} - 1}) \le x < \frac15(1 - \alpha_{i_{\kappa} - 2})$,
then $x\notin K^{2}_{\kappa}(w)$.  On the other hand, the interval $\big[\frac15(1 - \alpha_{i_{\kappa}-2}); \frac12\big]$
belongs to~$K^{2}_{\kappa}(w)$.  Similarly, if $i'_{\kappa}>1$, then $K^{2}_{\kappa}(w)$ contains
$\big[\frac12; \frac15(4 + \alpha'_{i'_{\kappa}-2})\big)$, but does not contain any $x$ with
$\frac15(4 + \alpha_{i_{\kappa} - 1}) \le x < \frac15(4 + \alpha_{i_{\kappa}})$.  Using induction, it follows that
\begin{equation*}
  K^{k}_{\kappa}(w) = \big[\frac15(1 - \alpha_{i_{\kappa} - k}), \frac15(4 + \alpha'_{i'_{\kappa} - k})\big]
\end{equation*}
for $k=1,\dots,\min\{i_{\kappa},i'_{\kappa}\}$.

Assume that $i_{\kappa}\le i'_{\kappa}$.  Then
$K^{i_{\kappa}}_{\kappa}(w) = \big[\frac15, \frac15(4 + \alpha'_{i'_{\kappa} - i_{\kappa}})\big)$,
and
\begin{equation*}
  d^{K_{\kappa}^{i_{\kappa}}}_{w}(x) =
  \begin{cases}
    \frac15, & \text{ if }  \frac15 \le x < \frac25, \\
    \frac25, & \text{ if }  \frac25 \le x < \frac35, \\
    \frac15\big(1 + \beta'_{i'_{\kappa}-i_{\kappa}}\big), & \text{ if }  \frac35 \le x < \frac45, \\
    \frac15(1 + \epsilon'_{i + 1}), & \text{ if } \frac15(4 + \alpha'_{i-1}) \le x < \frac15(4 + \alpha'_{i}), \; 1 \le i < i'_{\kappa} - i_{\kappa}, \\
    \frac15, & \text{ if } \frac15(4 + \alpha_{i'_{\kappa} - i_{\kappa} - 1}) \le x < \frac15(4 + \alpha_{i'_{\kappa} - i_{\kappa}}),
  \end{cases}
\end{equation*}
for $x\in K^{\kappa^{i_{\kappa}}}(w)$.  If $i'_{\kappa} = i_{\kappa}$, then
$K^{i_{\kappa} + 1}_{\kappa}(w) = [\frac25;\frac35)$.  If $i'_{\kappa} = i_{\kappa} + 1$, then
$K^{i_{\kappa} + 1}_{\kappa}(w) = [\frac25;\frac45)$.  Thus, if $i'_{\kappa}\in\{i_{\kappa}, i_{\kappa} + 1\}$, then
$K^{i_{\kappa} + 2}_{\kappa}(w) = \emptyset$.

If $i'_{\kappa} \ge i_{\kappa} + 2$, then
$K^{i_{\kappa} + 1}_{\kappa}(w) = \big[\frac25;\frac15(4 + \alpha_{i'_{\kappa} - i_{\kappa} - 1})\big)$, and thus
\begin{equation*}
  d^{K_{\kappa}^{i_{\kappa} + 1}}_{w}(x) =
  \begin{cases}
    \frac15, & \text{ if }  \frac25 \le x < \frac35, \\
    \frac15\big(1 + \beta'_{i'_{\kappa}-i_{\kappa}-1}\big), & \text{ if }  \frac35 \le x < \frac45, \\
    \frac15(1 + \epsilon'_{i + 1}), & \text{ if } \frac15(4 + \alpha'_{i-1}) \le x < \frac15(4 + \alpha'_{i}), \\
                                     &  \qquad\qquad\qquad\qquad\; 1 \le i < i'_{\kappa} - i_{\kappa}-1, \\
    \frac15, & \text{ if } \frac15(4 + \alpha_{i'_{\kappa} - i_{\kappa} - 2}) \le x < \frac15(4 + \alpha_{i'_{\kappa} - i_{\kappa} - 1}),
  \end{cases}
\end{equation*}
for $x\in K^{i_{\kappa} + 1}_{\kappa}(w)$.  Therefore,
$K_{\kappa}^{i_{\kappa} + 2} = \big[\frac35;\frac15(4 + \alpha_{i'_{\kappa} - i_{\kappa} - 2})\big)$.
If $i'_{\kappa} = i_{\kappa} + 2$, then $K_{\kappa}^{i_{\kappa} + 3} = \emptyset$.

Otherwise, if $i'_{\kappa} > i_{\kappa} + 2$, then
\begin{equation*}
  d^{K_{\kappa}^{i_{\kappa} + 2}}_{w}(x) =
  \begin{cases}
    \frac15\beta'_{i'_{\kappa}-i_{\kappa}-2}, & \text{ if }  \frac35 \le x < \frac45, \\
    \frac15(1 + \epsilon'_{i + 1}), & \text{ if } \frac15(4 + \alpha'_{i-1}) \le x < \frac15(4 + \alpha'_{i}), \\
                                     &  \qquad\qquad\qquad\qquad\; 1 \le i < i'_{\kappa} - i_{\kappa}-2, \\
    \frac15, & \text{ if } \frac15(4 + \alpha_{i'_{\kappa} - i_{\kappa} - 3}) \le x < \frac15(4 + \alpha_{i'_{\kappa} - i_{\kappa} - 2}),
  \end{cases}
\end{equation*}
whence $K_{\kappa}^{i_{\kappa} + 3} = \big[\frac45;\frac15(4 + \alpha_{i'_{\kappa}- i_{\kappa} - 3})\big)$, and
$K_{\kappa}^{i_{\kappa} + 4} = \emptyset$.

In any case, if $i_{\kappa} < i'_{\kappa}$, then
$[\frac25;\frac35)\subseteq K^{i_{\kappa}+1}_{\kappa}\setminus K^{i_{\kappa}+2}_{\kappa}$ and
$[\frac35;\frac45)\subseteq K^{i_{\kappa}+2}_{\kappa}$ (that is, any $x\in[\frac25;\frac35)$ disappears before any
$y\in[\frac3;\frac45)$.  Conversely, if $i_{\kappa} < i'_{\kappa}$, then
$[\frac35;\frac45)\subseteq K^{i'_{\kappa}}_{\kappa}\setminus K^{i'_{\kappa}+1}_{\kappa}$ and
$[\frac25;\frac35)\subseteq K^{i'_{\kappa}+1}_{\kappa}$.

To prove the claim, it suffices to construct the sequences $\epsilon,\epsilon'$ such that for any $\kappa_{0}>\frac15$
there are $\kappa,\kappa'>\frac15$ with $\kappa,\kappa'<\kappa_{0}$ and $i_{\kappa} < i'_{\kappa}$ and
$i_{\kappa'} > i'_{\kappa'}$.
Consider the function $f(n) = 1 - \frac{1}{n + 1}$, and let
\begin{align*}
  \alpha_{n} &=
  \begin{cases}
    \frac{f(n-1) + f(n+1)}{2}, & \text{ if $n$ is odd},\\
    f(n), & \text{ if $n$ is even},
  \end{cases}
                                 &
  \alpha'_{n} &=
  \begin{cases}
    f(n), & \text{ if $n$ is odd}, \\
    \frac{f(n-1) + f(n+1)}{2}, & \text{ if $n$ is even}.
  \end{cases}
\end{align*}
As $f$ is concave, the sequences $\epsilon_{n} = \alpha_{n}- \alpha_{n-1}$, $\epsilon'_{n} = \alpha'_{n}- \alpha'_{n-1}$
are decreasing.  Moreover,
\begin{equation*}
  \epsilon'_{1} > \epsilon_{1} = \epsilon_{2} > \epsilon'_{2} = \epsilon'_{3} > \epsilon_{4} = \dots
\end{equation*}
Let $\kappa_{i} = \frac15\big( + \frac{\epsilon_{i} + \epsilon'_{i}}{2}\big)$.  Then
$\frac15(1 + \epsilon'_{i})>\kappa_{i}>\frac15(1 + \epsilon_{i})$ if $i$ is odd, and
$\frac15(1 + \epsilon_{i})>\kappa_{i}>\frac15(1 + \epsilon'_{i})$ if $i$ is even.  Thus,
\begin{equation*}
  i_{\kappa_{i}} =
  \begin{cases}
    i - 1, & \text{ if $i$ is odd}, \\
    i, & \text{ if $i$ is even},
  \end{cases}
  \quad\text{ and }\quad
  i'_{\kappa_{i}} =
  \begin{cases}
    i, & \text{ if $i$ is odd}, \\
    i - 1, & \text{ if $i$ is even}.
  \end{cases}
\end{equation*}
Thus, the sequences $\epsilon,\epsilon'$ satisfy the claim.

\end{document}